\numberwithin{equation}{section}
\setlist[itemize,1]{label=$\bullet$}
\setlist[itemize,2]{label=$\triangleleft$}
\setlist[enumerate,1]{label=(\roman*)}
\setlist[enumerate,2]{label=(\arabic*)}
\definecolor{TUIl-orange}{RGB}{255, 121, 0}
\definecolor{TUIl-titleblue}{RGB}{0, 68, 121}
\definecolor{TUIl-textblue}{RGB}{0, 51, 88}
\definecolor{TUIl-green}{RGB}{0, 116, 122}
\definecolor{TUIl-grey}{RGB}{165, 165, 165}
\algnewcommand\algorithmicinput{\textbf{Input:}}
\algnewcommand\AlgInput{\item[\algorithmicinput]}
\algnewcommand\algorithmicoutput{\textbf{Output:}}
\algnewcommand\AlgOutput{\item[\algorithmicoutput]}
\newtheoremstyle{dotless}{}{}{\itshape}{}{\bfseries}{}{ }{}
\newtheoremstyle{no-italic}{}{}{}{}{\bfseries}{}{ }{}
\theoremstyle{dotless}
\newtheorem{Theorem}{Theorem}[section]
\newtheorem{Example}[Theorem]{Example}
\newtheorem{Lemma}[Theorem]{Lemma}
\newtheorem{Definition}[Theorem]{Definition}
\newtheorem{Assumption}{Assumption}
\newtheorem{Remark}[Theorem]{Remark}
\newtheorem{Proposition}[Theorem]{Proposition}
\newtheorem{Test Instance}[Theorem]{Test Instance}
\theoremstyle{no-italic}
\newcommand*{\R}{\mathbb{R}}
\newcommand*{\N}{\mathbb{N}}
\DeclareMathOperator{\cl}{cl}			
\def\R{{\mathbb R}}
\def\dom{\textup{dom }}
\def\epi{\textup{epi }}
\def\Int{\textup{int }}
\def\bd{\textup{bd }}
\def\cl{\textup{cl }}
\def\conv{\textup{conv }}
\def\gph{\textup{gph }}
\def\epi{\textup{epi }}
\title{On Clarke's Subdifferential of Marginal Functions}
\author{Gemayqzel Bouza  \thanks{Faculty of Mathematics and Computer Science, University of Havana, Havana, Cuba,
{\texttt{gema@matcom.uh.cu}}} \and Ernest Quintana \thanks{Institute for Mathematics, Technische Universität Ilmenau, Ilmenau, Germany,
{\texttt{ernest.quintana-aparicio@tu-ilmenau.de}}} \and Christiane Tammer \thanks{Institute of Mathematics, Martin-Luther-Universität Halle-Wittenberg, Halle, Germany, {\texttt{christiane.tammer@mathematik.uni-halle.de}}}}
\date{}
\begin{document}

\maketitle

\begin{abstract}
In this short note, we derive an upper estimate of Clarke's subdifferential of marginal functions in Banach spaces. The structure of the upper estimate is very similar to other results already obtained in the literature. The novelty lies on the fact that we derive our assertions in general Banach spaces, and avoid the use of the Asplund assumption.
\end{abstract}

\noindent {\small\textbf{Key Words:}} Marginal functions,  Nonsmooth functions and set-valued mappings, \newline Generalized differentiation.

\vspace{2ex} \noindent {\small\textbf{Mathematics subject classifications (MSC 2010):} 90C30, 49J52, 49J53.}


\section{Introduction}

Marginal functions are maps $\varphi : X \rightarrow \overline{\R}$ of the form 

\begin{equation}\label{eq: marginal function}
\varphi(x):= \inf_{y\in F(x)} f(x,y),
\end{equation} where $X$ and $Y$ are normed spaces, $\overline{\R}: = [-\infty, + \infty]$ is the extended set of real numbers, $F: X\rightrightarrows Y$ is a given set-valued mapping, and  $f: X \times Y \to \overline{\R}$ is a given functional. Due to its numerous applications, functionals of this type are among the most important ones in variational analysis, mathematical programming, and control theory. The study of subdifferentiability properties of  $\varphi,$ and specifically the computation of its subdifferential (in a specific sense), is therefore a relevant research topic. 

Under convexity assumptions on the graph of $F$ and the functional $f,$ the Fenchel subdifferential of $\varphi$ is analyzed in \cite{AY2015,BondarevskyLeschovMinchenko2016,Luderer1991,Minchenko1991}. In this case, because of the nice convexity structure of the problem, the derived formulas are exact under mild conditions. On the other hand, the nonconvex setting has also been studied extensively, see for example \cite{HUrruty1978,Mordukhovich1,MNY2009,Thibault1991}. However, in this case, exactness is already too difficult to ensure.  These results are then focused on computing upper estimates of different types of subdifferentials of the marginal function. In \cite{bot2012}, Bo{\c{t}} studied  Clarke's subdifferential of $\varphi$ when both $X$ and $Y$ are finite dimensional, $F$ is constant, and $f$ is given as the pointwise maxima of a finite number of continuously differentiable functions.  A similar study was considered by Hiriart-Urruty in \cite{HUrruty1978} for the case in which  $\varphi$ is locally Lipschitz (in particular, when $F$ is locally Lipschitz). Later, in \cite{Mordukhovich1,MNY2009,Thibault1991}, upper estimates of Fr\'{e}chet and Mordukhovich's subdifferential of $\varphi$ were derived assuming that $X$ and $Y$ are Asplund spaces and that $F$ has a closed graph. 

To the best of our knowledge, upper estimates for Clarke's subdifferential of $\varphi$ have not been studied in the literature without the Asplund assumption on the underlying spaces. In this paper, we address this case. Our results rely only on the Banach structure of $X$ and $Y,$ and on the Aubin property of $F$ at notable points. 

The rest of the paper is organized as follows: In Section 2, we establish important notations and definitions used throughout the text. In Section 3, the main result is derived. We conclude with some remarks and future lines of research in Section 4.

\section{Preliminaries}

We start by establishing the main notations used in the paper. Given a normed space $(X, \|\cdot\|_X),$ we will denote by $(X^*,\|\cdot\|_X^*)$ its topological dual. In addition, the closed unit balls in $X$ and $X^*$ will be denoted as $\mathbb{B}_X$ and $\mathbb{B}^*_X$ respectively. We omit the subscript $X$  if there is no risk of confusion. For a nonempty set $A\subseteq X$, $\Int A$, $\cl A$, $\bd A$, $\conv A $ stand for the interior, closure, boundary and convex hull of $A$, respectively. Furthermore, if $B \subseteq X^*,$ we denote the closure of the convex hull of $B$ in the weak$^*$ topology of $X^*$ by $\overline{\operatorname{conv}}^* B$.  Throughout the rest of this section, we assume that normed spaces $X$ and $Y$ are given.

\begin{Definition}\label{def Lipschitz-like} 

Let $F: X\rightrightarrows Y$ be a set-valued mapping. 

\begin{enumerate}
    \item  The graph of $F$ is the set defined  as 

\[
\gph F:=\big\{(x,y)\in X\times Y
\mid y\in F(x)\big\}.
\]

\item The domain of $F$ is the set given by 

$$\dom F:= \{x \in X \mid F(x) \neq \emptyset\}.$$

\item We say that $F$ satisfies the Aubin property at $(\bar{x},\bar{y}) \in \gph F$ if there exists $\ell\geq 0,$ together with neighborhoods $U$ of $\bar{x}$ and $V$ of $\bar{y},$ such that

\begin{equation*}
\forall \; x,x^{\prime}\in U: \qquad F(x)\cap V\subseteq F(x^{\prime})+\ell\left\Vert x-x^{\prime}\right\Vert _{X} \mathbb{B}_Y 
\end{equation*}
\end{enumerate}

\end{Definition}

\begin{Definition}
Let $f:X\rightarrow \overline{\R}$ be an extended real valued functional.

\begin{enumerate}
\item The domain of $f$  is the set 

\[
\operatorname{dom}f:=\{x\in X\mid f(x)<+\infty\}.
\]
 \item The epigraph of $f$ is the set defined as 
 
 \[
\epi f:=\{(x,t)\in 
X\times \R \mid f(x)\leq t\}.
\]

\item We say that $f$ is convex if $\epi f$ is a convex set.

\item We say that $f$ is Lipschitz on a set $A \subseteq X$ provided that $f$ is finite on $A$ and there exists $L>0$ 	such that
\[
\forall \; x,x' \in A:	|f(x)-f(x')|\leq L \| x-x'\|_X.
\]  We say that $f$ is locally Lipschitz at $\bar{x}$ if there is a neighborhood $U$ of $\bar{x}$ such that $f$ is Lipschitz on $U$. In addition, $f$ is said to be locally Lipschitz on $A$, if $f$ is locally Lipschitz at every point $x\in A$. Thus, in particular, $A\subseteq\operatorname*{int}\operatorname*{dom}f$.
\end{enumerate}
\end{Definition}

An important class of extended real valued functionals that will be useful in the sequel are the so called support functions of subsets in a dual space. Formally, for a set $A\subseteq X^*,$ this is the functional  $\sigma_A: X \rightarrow \overline{\R}$ defined as

\begin{equation*}
\sigma_A(x):=\sup\limits_{x^*\in A}\left\langle x^{\ast},x \right\rangle.
\end{equation*}

Next, we present the main concepts of generalized differentiation that will be employed in the paper. The material in this part is mostly classical, and we refer the reader to \cite{Clarke1} for a more in depth discussion of these concepts and their properties.
\begin{Definition} Let  $f: X\rightarrow \overline{\R}$ be locally Lipschitz at a given point $\bar{x}\in \dom f$. 
\begin{enumerate}
\item For each $u\in X$, the generalized directional derivative of $f$ at $\bar{x}$ in the direction $u$ is defined by
\begin{equation*}
f^{\circ}(\bar{x},u):=\limsup\limits_{x \rightarrow \bar{x},t\downarrow 0}\frac{f(x+tu)-f(x)}{t}.
\label{gendirder}
\end{equation*}
\item The Clarke subdifferential of $f$ at $\bar{x}\in \dom f$ is the set defined as
\begin{equation*}
\partial^{\circ} f(\bar{x}):=\{x^*\in X^*\mid \forall \;u\in X: f^{\circ}(\bar{x},u)\geq \langle x^*,u \rangle\}. \label{sca-sub-lip}
\end{equation*}
\end{enumerate}
\end{Definition}

Clarke's subdifferential generalizes the well known concept of Fenchel subdifferential of a convex functional to the locally Lipschitz setting, see Proposition \ref{prop:clarke properties} \eqref{item:clark convex} below. For a convex functional $f$ and the point $\bar{x} \in \dom f,$ this is the set

$$ \partial f(\bar{x}):= \{x^* \in X^* \mid \forall\; x \in X: \langle x^*,x-\bar{x}\rangle \leq f(x) - f(\bar{x})\}.$$

We continue by defining Clarke's tangent and normal cones to a set.

\begin{Definition}
Let $A \subseteq X$ be nonempty and let $\bar{x}\in A$. 

\begin{enumerate}
\item The Clarke tangent cone to $A$ at $\bar{x}$ is the set defined as 
$$ T(A,\bar{x}):=\left\{u\in X \bigl\vert \begin{array}{ll}
   \forall\; \{x_k\}_{k \geq 1} \subseteq A,\; \{t_k\}_{k\geq 1} \subseteq \R \mbox{ with } x_k \to \bar{x}, \;  t_k \downarrow 0: \\
     \exists \; \{u_k\}_{k\geq 1} \textrm{ such that  } u_k \to u \textrm{ and } x_k +t_ku_k \in A \;\forall\;k\end{array} \right\}.$$  A vector $u\in T(A,\bar{x})$ is called a tangent to $A$ at $\bar{x}$.

\item The Clarke normal cone of $A$ at $\bar{x}$ is the set defined by 
\[
N_C(A,\bar{x}):=\{x^{*}\in X^{*}\mid \forall\; u\in T(A,\bar{x}): \langle x^*,u \rangle\leq 0 \}.
\]
\end{enumerate}
\end{Definition}
We can now define the coderivative of a set-valued mapping in the sense of Clark.
\begin{Definition}
Let $F:X \rightrightarrows Y$ be a given set-valued mapping and let $(\bar{x},\bar{y}) \in \gph F.$ The Clarke coderivative of $F$ at  $(\bar{x},\bar{y})$ is the set-valued mapping $D^*_C F(\bar{x},\bar{y}): Y^* \rightrightarrows X^*$ defined as

$$D^*_C F(\bar{x},\bar{y})(y^*):= \{x^* \in X^* \mid (x^*,-y^*) \in N_C(\gph F,(\bar{x},\bar{y}))\}.$$ 
\end{Definition}

We close this section with the following proposition, that collects some properties of Clarke's subdifferential that are used in the main results, see \cite{Clarke1} for the proofs.

\begin{Proposition}\label{prop:clarke properties}  Let $X$ be a Banach space and $f,g:X \rightarrow \overline{\R}$ be locally Lipschitz at $\bar{x} \in X.$ The following statements hold:

\begin{enumerate}

\item \label{item:clarke w*compact subdif} The set $\partial^{\circ} f(\bar{x})$ is nonempty, convex, and $w^*$- compact (that is, compact w.r.t. the $w^*$- topology).

\item \label{item:clarke sum rule} $\partial^\circ (f  + g) (\bar{x}) \subseteq  \partial^\circ f(\bar{x}) + \partial^\circ g(\bar{x}).$ Equality holds if $f $ and $g$ are convex.

\item \label{item: clarke directional deriv = support} $f^\circ (\bar{x}, \cdot)= \sigma_{\partial^\circ f(\bar{x})}(\cdot).$

\item \label{item:clarke fermat} If $\bar{x}$ is a local minimum of $f,$ then $0 \in \partial^\circ f(\bar{x}).$

\item \label{item:clark convex} If $f$ is convex, then $\partial^\circ f(\bar{x}) = \partial f(\bar{x}).$

\end{enumerate}

\end{Proposition}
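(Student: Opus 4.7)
The plan is to derive all five items from a single cornerstone, namely that the map $u \mapsto f^\circ(\bar{x}, u)$ is a continuous sublinear functional on $X$. Using the local Lipschitz property of $f$ with constant $L$ near $\bar{x}$, I would first verify that $f^\circ(\bar{x}, \cdot)$ is finite with $|f^\circ(\bar{x}, u)| \leq L \|u\|_X$, positively homogeneous (direct from the definition after a change of variable in $t$), and subadditive in $u$ (by decomposing the difference quotient $(f(x + t(u+v)) - f(x))/t$ into two pieces with intermediate point $x + tu$, and exploiting that $x + tu \to \bar{x}$ as $x \to \bar{x},\ t \downarrow 0$, so that the second $\limsup$ is dominated by $f^\circ(\bar{x}, v)$).

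Given sublinearity and continuity, item (i) follows from a Hahn-Banach argument: the set $\partial^\circ f(\bar{x})$ is precisely the set of continuous linear minorants of $f^\circ(\bar{x}, \cdot)$, which is nonempty and convex. The Lipschitz bound shows $\partial^\circ f(\bar{x}) \subseteq L \mathbb{B}^*_X$, and the intersection of $w^*$-closed halfspaces inside this $w^*$-compact ball (Banach-Alaoglu) yields $w^*$-compactness. Item (iii) is a direct consequence of the Hahn-Banach theorem: a continuous sublinear functional is recovered as the pointwise supremum of its continuous linear minorants, i.e., as the support function of $\partial^\circ f(\bar{x})$.

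For item (ii), I would observe that $(f+g)^\circ(\bar{x}, u) \leq f^\circ(\bar{x}, u) + g^\circ(\bar{x}, u)$ follows by splitting the numerator and using $\limsup$ subadditivity, and then combine this with item (iii) via the support-function calculus: if $\sigma_A \leq \sigma_{B+C}$ on $X$, then $A \subseteq B + C$ by $w^*$-closed convex hull considerations and Hahn-Banach separation. The equality in the convex case is the classical Moreau-Rockafellar sum rule, for which local Lipschitz continuity at $\bar{x}$ serves as the required qualification condition. For item (iv), local minimality of $\bar{x}$ gives $f(\bar{x} + tu) - f(\bar{x}) \geq 0$ for all small $t > 0$ and every $u$, hence $f^\circ(\bar{x}, u) \geq 0$ by choosing the particular sequence $x = \bar{x}$ inside the $\limsup$; by item (iii) this says $\sigma_{\partial^\circ f(\bar{x})}(u) \geq 0$ for every $u \in X$, which forces $0 \in \partial^\circ f(\bar{x})$ since this set is $w^*$-closed and convex.

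Item (v) requires identifying $f^\circ(\bar{x}, u)$ with the one-sided directional derivative $f'(\bar{x}, u)$ for convex locally Lipschitz $f$. Convexity gives existence and monotonicity of the difference quotient in $t$, and the Lipschitz estimate permits passing from the outer limit $x \to \bar{x}$ to the choice $x = \bar{x}$ in the $\limsup$ without changing the value. Once $f^\circ(\bar{x}, \cdot) = f'(\bar{x}, \cdot)$ is established, the classical identification of the Fenchel subdifferential as the set of linear minorants of the directional derivative yields $\partial^\circ f(\bar{x}) = \partial f(\bar{x})$. The main obstacles I expect are the subadditivity proof, where careful bookkeeping of the $\limsup$ is needed to justify the intermediate-point trick, and the convex identification in (v), which hinges on showing that the outer $\limsup$ in $x \to \bar{x}$ does not strictly exceed the value obtained at $\bar{x}$ itself, for which the uniform Lipschitz estimate on a neighborhood is essential.
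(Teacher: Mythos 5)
The paper does not prove this proposition itself but delegates it to the cited monograph \cite{Clarke1}, and your sketch reproduces exactly the classical argument found there: sublinearity plus the Lipschitz bound for $f^{\circ}(\bar{x},\cdot)$, Hahn--Banach and Banach--Alaoglu for items (i) and (iii), the support-function calculus for the sum rule, and the monotone difference quotient together with the uniform Lipschitz estimate for the convex identification in (v). The proposal is correct and takes essentially the same approach as the paper's reference.
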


\section{Main Result}

The basic setting in which we derive the results of this section is the following:

\begin{Assumption}\label{ass marginal}
Let $X$ and $Y$ be Banach spaces and $F:X \rightrightarrows Y$ be a set-valued mapping with $F(x)\neq\emptyset$ for every $x\in X.$ Furthermore, let $f:X\times Y \to \overline{\R}$ be a given extended real valued functional, and consider the associated marginal function $\varphi:X\to \overline{\R}$ given in \eqref{eq: marginal function}. In addition, consider the set-valued solution mapping $S:X\rightrightarrows Y$ defined as 

\begin{equation*}\label{eq: solution map}
S(x):= \{y\in F(x): f(x,y)= \varphi(x)\}.
\end{equation*} 

\end{Assumption}

We start by proving some lemmata that contribute to the main theorem. The assertion in the next lemma  is proved using Ekeland's variational principle,  see \cite{Eke74,eke79,Schirotzek2007}.

\begin{Lemma}\label{lem: ekeland's lemma}

Let $g: Y \to \mathbb{R}$ be a given functional, and suppose that $g$ is convex, continuous, and bounded from below. Then, there are sequences $\{v_k\}_{k\geq 1} \subseteq Y$ and $\{y_k^*\}_{k\geq 1}\subseteq Y^*$ such that:
\begin{enumerate}
\item $g(v_k) \to \inf \limits_{y\in Y} g(y),$

\item $y_k^*\in \partial g(v_k),$

\item $|\langle y_k^*, v_k \rangle |\to 0,$

\item $y_k^* \to 0.$
\end{enumerate}

\end{Lemma}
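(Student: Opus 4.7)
The plan is to apply Ekeland's variational principle to $g$, but with the twin parameters (approximation tolerance $\epsilon$ and deviation radius $\lambda$) tuned to the norm of a minimizing sequence. This joint calibration is what allows all four items to be read off the construction at once, including the subtle one (iii).

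Concretely, I would fix $\mu := \inf_{y\in Y} g(y) \in \R$, pick $\epsilon_k \downarrow 0$, and choose $y^k \in Y$ with $g(y^k) \leq \mu + \epsilon_k^2$. Setting $\lambda_k := \epsilon_k \max(1,\|y^k\|_Y)$ and applying Ekeland's variational principle to the continuous, convex, bounded-below function $g$ on the Banach space $Y$ with tolerance $\epsilon_k^2$ and radius $\lambda_k$ produces a point $v_k \in Y$ satisfying $g(v_k) \leq g(y^k)$, $\|v_k - y^k\|_Y \leq \lambda_k$, and
\[
g(v_k) \leq g(y) + \frac{\epsilon_k^2}{\lambda_k}\|y - v_k\|_Y \quad \text{for all } y \in Y.
\]
Thus $v_k$ is a global minimizer of the convex continuous function $g(\cdot) + (\epsilon_k^2/\lambda_k)\|\cdot - v_k\|_Y$. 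Combining Fermat's rule (Proposition \ref{prop:clarke properties}\eqref{item:clarke fermat}), the convex sum rule (Proposition \ref{prop:clarke properties}\eqref{item:clarke sum rule}), the identification of Clarke and Fenchel subdifferentials in the convex case (Proposition \ref{prop:clarke properties}\eqref{item:clark convex}), and the identity $\partial \|\cdot - v_k\|_Y(v_k) = \mathbb{B}^*_Y$, I would extract some $y_k^* \in \partial g(v_k)$ with $\|y_k^*\|_Y^* \leq \epsilon_k^2/\lambda_k = \epsilon_k/\max(1,\|y^k\|_Y) \leq \epsilon_k$.

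Checking the four conclusions is then routine. Condition (i) comes from $\mu \leq g(v_k) \leq g(y^k) \leq \mu + \epsilon_k^2$; (ii) holds by construction; (iv) is the estimate $\|y_k^*\|_Y^* \leq \epsilon_k$ just derived; and (iii) follows from
\[
|\langle y_k^*, v_k \rangle| \leq \|y_k^*\|_Y^* \, \|v_k\|_Y \leq \frac{\epsilon_k}{\max(1,\|y^k\|_Y)}\bigl(\|y^k\|_Y + \lambda_k\bigr) \leq \epsilon_k + \epsilon_k^2.
\]

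The main obstacle is precisely condition (iii). A naive application of Ekeland with a radius independent of $\|y^k\|_Y$ (e.g.\ $\lambda_k = \sqrt{\epsilon_k}$) already gives (i), (ii) and (iv), but $\langle y_k^*, v_k \rangle$ need not vanish when the infimum of $g$ is only approached ``at infinity'' and $\|v_k\|_Y \to \infty$ (the scalar example $g(y) = e^{-y}$ is illustrative). Scaling $\lambda_k$ with $\|y^k\|_Y$ forces $\|y_k^*\|_Y^*$ to decay fast enough to absorb any growth of $\|v_k\|_Y$, which is exactly what makes the product in (iii) tend to zero.
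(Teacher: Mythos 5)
Your proposal is correct and follows essentially the same route as the paper: take a minimizing sequence, apply Ekeland's variational principle with a radius $\lambda_k$ calibrated to the norm of the minimizing sequence, and extract $y_k^*\in\partial g(v_k)$ via Fermat's rule, the convex sum rule, and the Clarke--Fenchel identification. The paper's choice $\lambda_k=\|y_k\|+1$ plays exactly the role of your $\lambda_k=\epsilon_k\max(1,\|y^k\|_Y)$, namely forcing $\|y_k^*\|_Y^*\,\|v_k\|_Y\to 0$, which is the key point you correctly identify for item (iii).
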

\begin{proof}
Let $\{y_k\}_{k\geq 1}$ be a minimizing sequence such that 

$$g(y_k) < \inf \limits_{y\in Y} g(y) +\frac{1}{k}.$$ By Ekeland's variational principle \cite[Theorem 8.2.4]{Schirotzek2007}, for every $\lambda_k >0$ we can find $v_k \in Y$ such that

\begin{itemize}
\item[(a)] $\|y_k-v_k\|\leq \lambda_k,$

\item[(b)] $g(v_k) \leq \inf \limits_{y\in Y} g(y) + \frac{2}{k},$ and

\item[(c)] $v_k$ is a minimizer of the functional 

$$g_k(\cdot):= g(\cdot)+ \frac{1}{k\lambda_k}\|\cdot -v_k\|.$$
\end{itemize}

Since $g$ is a continuous convex functional, it follows that $g_k$ is also convex and continuous. Hence, from (c) and items \eqref{item:clarke fermat}, \eqref{item:clark convex}, and \eqref{item:clarke sum rule} in Proposition \ref{prop:clarke properties}, we deduce that

\begin{equation}\label{eke1}
0\in \partial g_k(v_k)= \partial g(v_k)+ \frac{1}{k\lambda_k}\mathbb{B}^*.
\end{equation}

From \eqref{eke1}, we then get the existence of $y_k^* \in \partial g(v_k)$ with 
 
\begin{equation}\label{eq: y_k^* upper bound}
\|y_k^*\|_* \leq \frac{1}{k \lambda_k}.
\end{equation} On the other hand, from (a), we have in particular that

\begin{equation}\label{eq: v_k upper bound}
\|v_k\| \leq \|y_k\| + \lambda_k.
\end{equation} The inequalities \eqref{eq: y_k^* upper bound}  and \eqref{eq: v_k upper bound} now imply 

\begin{equation}\label{eq: y_k*, v_k upper bound }
|\langle y_k^* , v_k \rangle| \leq \|y^*_k\|_* \|v_k\|\leq  \frac{\lambda_k + \|y_k\|}{k \lambda_k}.
\end{equation}

Since  $v_k$ and $y_k^*$ depend on $\lambda_k,$ we  next show that $\lambda_k$ can be chosen such that the properties $(i)- (iv)$ in the statement are true. Indeed, independently of the choice of the sequence $\{\lambda_k\}_{k\geq 1}$, we get properties $(i)$ and $(ii)$ from (b) and \eqref{eke1}, respectively. In order to guarantee  $(iii)$ and $(iv)$, we set $\lambda_k:= \|y_k\| + 1$ for every $k \in \N.$ Then, from \eqref{eq: y_k^* upper bound}, we find that

$$\|y_k^*\|_* \leq \frac{1}{k(\|y_k\| + 1)}  \leq \frac{1}{k} \to 0,$$ which proves $(iii).$ Furthermore, from \eqref{eq: y_k*, v_k upper bound } it follows that

$$|\langle y_k^* , v_k \rangle| \leq  \frac{\lambda_k + \|y_k\|}{k \lambda_k} = \frac{2 \|y_k\| + 1}{k (\|y_k\| + 1)} \leq \frac{2}{k} \to 0,$$ and thus $(iv)$ is fulfilled. The proof is complete.







\end{proof}

%
%
%
%
%

Before stating the next lemma, we establish some notation. For a set $G \subseteq X^* \times Y^*,$ the projection of $G$ onto $X^*$ is defined as
 
 $$G_{X^*}:= \{x^* \in X^* \mid \; \exists \; y^* \in Y^*: (x^*,y^*) \in G\}.$$ 
The projection of $G$ onto $Y^*$, denoted by $G_{Y^*},$ is defined similarly. For $(\overline{u}, \overline{v}) \in X \times Y$, we put
 
$$
G^{(\bar{u},\bar{v})}:= \{(x^*,y^*) \in  G \mid\;  \langle (x^*,y^*),(\bar{u},\bar{v})\rangle=\sigma_G((\bar{u},\bar{v})) \},
$$ 
and hence,

$$
G^{(\bar{u},\bar{v})}_{Y^*}=\{y^* \in Y^* \mid \exists\; x^*\in X^* \textrm{ such that } (x^*,y^*) \in G^{(\bar{u},\bar{v})}\}. 
$$ 
It is easy to see that the sets $G^{(\bar{u},\bar{v})}$  and $G^{(\bar{u},\bar{v})}_{Y^*}$ are convex and $w^*$- closed if $G$ is convex and $w^*$- closed.
 
\begin{Lemma}\label{lem: subdif support projection function}

Let $G$ be a $w^*$- compact convex subset of $X^*\times Y^*.$ Fix $\bar{u} \in X$ and consider the functional $g: Y \to \mathbb{R}$ defined as follows:

\begin{equation}\label{eq: support projection function}
 g(v):= \sigma_G(\bar{u},v).
\end{equation} Then, $g$ is a continuous convex functional satisfying 

$$\forall \; \bar{v} \in Y: \partial g(\bar{v})= G^{(\bar{u},\bar{v})}_{Y^*}.$$

\end{Lemma}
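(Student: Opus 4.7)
The plan is to handle convexity and continuity of $g$ first, and then to prove the subdifferential formula by two inclusions. Since
\[
g(v) = \sup_{(x^*,y^*) \in G}\left[ \langle x^*, \bar{u}\rangle + \langle y^*, v\rangle \right],
\]
$g$ is a pointwise supremum of affine functionals in $v$, hence convex. The $w^*$-compactness of $G$, combined with the uniform boundedness theorem, gives $M := \sup_{(x^*,y^*) \in G} \|y^*\|_* < \infty$, which yields the global Lipschitz estimate $|g(v) - g(v')| \leq M \|v - v'\|_Y$; in particular, $g$ is continuous.

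The inclusion $G^{(\bar{u},\bar{v})}_{Y^*} \subseteq \partial g(\bar{v})$ is immediate. Given $y^* \in G^{(\bar{u},\bar{v})}_{Y^*}$, pick $x^* \in X^*$ with $(x^*, y^*) \in G$ and $\langle x^*, \bar{u}\rangle + \langle y^*, \bar{v}\rangle = \sigma_G(\bar{u},\bar{v}) = g(\bar{v})$. Then for every $v \in Y$,
\[
g(v) \geq \langle x^*, \bar{u}\rangle + \langle y^*, v\rangle = g(\bar{v}) + \langle y^*, v - \bar{v}\rangle,
\]
which is the subgradient inequality.

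For the converse, my plan is to lift the problem to the support function on the full product space. Introduce $h : X \times Y \to \R$, $h(u, v) := \sigma_G(u, v)$, so that $g(v) = h(\bar{u}, v)$. The $w^*$-compactness of $G$ makes $h$ real-valued, convex, and continuous on $X \times Y$; moreover, since $G$ is $w^*$-closed and convex, its Fenchel conjugate is the indicator of $G$, and the Fenchel--Young equality delivers
\[
\partial h(\bar{u}, \bar{v}) = \{(x^*, y^*) \in G : \langle x^*, \bar{u}\rangle + \langle y^*, \bar{v}\rangle = \sigma_G(\bar{u}, \bar{v})\} = G^{(\bar{u},\bar{v})}.
\]
Applying the convex chain rule to the affine inclusion $L : v \mapsto (\bar{u}, v)$, whose linear part has adjoint $(x^*, y^*) \mapsto y^*$, then yields $\partial g(\bar{v}) = G^{(\bar{u},\bar{v})}_{Y^*}$.

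The main technical point is the chain rule, which in general requires a constraint qualification; here continuity of $h$ everywhere trivially guarantees it. A self-contained alternative avoiding the chain rule is to compute the one-sided directional derivative directly: from the classical identity $h'(z;\cdot) = \sigma_{\partial h(z)}(\cdot)$ for continuous convex $h$, one obtains
\[
g'(\bar{v}; v) = h'((\bar{u}, \bar{v}); (0, v)) = \sigma_{G^{(\bar{u},\bar{v})}}(0, v) = \sigma_{G^{(\bar{u},\bar{v})}_{Y^*}}(v).
\]
Any $y^* \in \partial g(\bar{v})$ therefore satisfies $\langle y^*, v\rangle \leq \sigma_{G^{(\bar{u},\bar{v})}_{Y^*}}(v)$ for every $v \in Y$, and the bipolar theorem combined with the convexity and $w^*$-closedness of $G^{(\bar{u},\bar{v})}_{Y^*}$ (explicitly noted in the excerpt) gives $y^* \in G^{(\bar{u},\bar{v})}_{Y^*}$, completing the argument.
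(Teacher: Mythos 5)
Your proof is correct, but it takes a genuinely different route from the paper's. The paper obtains both inclusions simultaneously by viewing $g$ as the pointwise supremum of the affine functionals $g_{(x^*,y^*)}(v)=\langle y^*,v\rangle+\langle x^*,\bar{u}\rangle$ indexed by the $w^*$-compact set $G$ and invoking the Ioffe--Tikhomirov-type rule for subdifferentials of suprema over compact index sets (\cite[Proposition 4.5.2]{Schirotzek2007}); this gives $\partial g(\bar{v})=\overline{\operatorname{conv}}^*\bigl(G^{(\bar{u},\bar{v})}_{Y^*}\bigr)$ in one step, and the hull is then dropped using the convexity and $w^*$-closedness of the projection. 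You instead prove the easy inclusion $G^{(\bar{u},\bar{v})}_{Y^*}\subseteq\partial g(\bar{v})$ by the subgradient inequality and obtain the converse from standard convex calculus: the Fenchel--Young identification $\partial\sigma_G(\bar{u},\bar{v})=G^{(\bar{u},\bar{v})}$ together with the exact chain rule for composition with the affine map $v\mapsto(\bar{u},v)$ (the qualification condition being automatic since $\sigma_G$ is everywhere continuous), or alternatively the max formula $g'(\bar{v};\cdot)=\sigma_{\partial g(\bar{v})}(\cdot)$ followed by $w^*$-separation. Both arguments ultimately rest on the same nontrivial fact, namely that $G^{(\bar{u},\bar{v})}_{Y^*}$ is convex and $w^*$-closed, which holds here because it is the image of the $w^*$-compact set $G^{(\bar{u},\bar{v})}$ under the $w^*$-to-$w^*$ continuous projection; the paper's route is shorter if one is willing to cite the supremum rule, while yours is more self-contained, using only textbook facts about conjugates, affine chain rules, and support functions. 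One cosmetic remark: the final step of your second route is not the bipolar theorem proper but the fact that a $w^*$-closed convex subset of $Y^*$ is determined by its support function on $Y$ (strong separation in the $w^*$ topology); the conclusion is unaffected.
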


\begin{proof} The statement will be a consequence of \cite[Proposition 4.5.2]{Schirotzek2007}. Indeed, by definition, $G$ with the $w^*$- topology is a compact Hausdorff space. Now consider, for $(x^*,y^*) \in G,$ the functional $g_{(x^*,y^*)}: Y \to  \R$ defined by $g_{(x^*,y^*)}(v):= \langle y^*,v\rangle + \langle x^*,\bar{u}\rangle. $ Obviously, every functional of this type is convex and continuous at $\bar{v}.$ Furthermore, by definition, we have  $ g(v)=  \sup\limits_{(x^*,y^*) \in G} g_{(x^*,y^*)}(v) $ for every $v \in Y.$ Moreover, because of the definition of the $w^*$- topology, the map $(x^*,y^*) \to g_{(x^*,y^*)}(v)$ is continuous (and hence upper semicontinuous) for any $v\in Y.$  

On the other hand, note that $G^{(\bar{u},\bar{v})}= \bigg\{(x^*,y^*) \in G \mid  g_{(x^*,y^*)}(\bar{v}) = g(\bar{v})\bigg\}$ and that $\partial g_{(x^*,y^*)}(\bar{v})= \{y^*\}.$ Taking this into account and applying \cite[Proposition 4.5.2]{Schirotzek2007}, we obtain 

\begin{eqnarray*}
\partial g(\bar{v}) & = & \overline{\operatorname{conv}}^*\left( \bigcup_{(x^*,y^*) \in G^{(\bar{u},\bar{v})}} \partial g_{(x^*,y^*)}(\bar{v})\right)\\
                    & = & \overline{\operatorname{conv}}^*\left( \bigcup_{(x^*,y^*) \in G^{(\bar{u},\bar{v})}} \{y^*\}\right)\\
                    & = & \overline{\operatorname{conv}}^*\left( G^{(\bar{u},\bar{v})}_{Y^*}\right).
\end{eqnarray*} Thus, the statement follows from the fact that $G^{(\bar{u},\bar{v})}_{Y^*}$ is $w^*$- closed and convex.
\end{proof}

To proceed, we need the notions of inner semicompactness and closedness of a set-valued mapping. The inner semicompactness and closedness of the solution mapping $S$ from  Assumption \ref{ass marginal} are standard hypotheses when deriving estimates for subdifferentials of marginal functions, see for example \cite[Theorem 1.108 and Theorem 3.38]{Mordukhovich1}.

\begin{Definition}\label{def:propsetvmap} Let  $\bar{x}\in\dom S$. We say that:

\begin{enumerate}
\item  $S$ is inner semicompact at $\bar{x},$ if $\bar{x} \in \Int (\dom S)$ and, for every sequence $x_k \rightarrow \bar{x},$ there is a sequence $y_k \in S(x_k)$ that contains a convergent subsequence as $k \rightarrow \infty$. 

\item $S$ is closed at $\bar{x},$ if for any sequence $\{(x_k,y_k)\}_{k\geq 1} \subseteq \gph S$ with $(x_k,y_k) \to (\bar{x},\bar{y}),$  we have $(\bar{x},\bar{y})\in \gph S.$
\end{enumerate}

\end{Definition}

\begin{Remark}
By Weierstrass's Theorem \cite[Theorem 2.3]{jahn2006} it is straightforward to verify that, in our context, the set-valued mapping $S$ is inner semicompact at $\bar{x}$ if the following assumptions are fulfilled:
\begin{enumerate}
    \item $Y$ is finite dimensional,
    \item there exists a neighborhood $U$ of $\bar{x}$ such that:
\begin{itemize}
     \item $f$ is lower semicontinuous (l.s.c) on $U$,
     \item  $F(x)$ is compact for every $x \in U,$
     \item the set $F[U]$ is bounded.
\end{itemize}
\end{enumerate}

\end{Remark}
\begin{Remark}\label{rem: closedness of S}
A sufficient condition for the closedness of $S$ at $\bar{x}$ is that 

\begin{enumerate}
\item $\varphi$ is continuous at $\bar{x},$

\item $f$ is l.s.c on $\{\bar{x}\}\times F(\bar{x}),$

\item $F$ is closed at $\bar{x}.$
\end{enumerate}

Indeed, let $\{(x_k,y_k)\}_{k\geq 1} \subseteq \gph S$ be convergent to $(\bar{x},\bar{y}).$ Then, for every $k\in  \mathbb{N},$ we have $f(x_k,y_k) = \varphi(x_k).$ Because $\varphi$ is continuous at $\bar{x},$ it follows that $f(x_k,y_k) \to \varphi(\bar{x}).$ Since  $S(x_k) \subseteq F(x_k)$ for every $k \in \mathbb{N}$ and $F$ is closed at $\bar{x},$ we obtain $\bar{y} \in F(\bar{x}).$ Hence, we also have that $f$ is l.s.c at  $(\bar{x},\bar{y}).$ From this, we get

$$f(\bar{x},\bar{y})\leq \liminf_{k\to \infty} f(x_k,y_k) = \varphi(\bar{x}),$$ which implies $\bar{y} \in S(\bar{x}).$
\end{Remark}

In the following lemma, we analyze Clarke's subdifferential of the marginal function $\varphi$ in the unconstrained case $F(\cdot):= Y$. This is a weaker version of our main result that will be derived in Theorem \ref{thm: basic subdif marginal functions theorem}.

\begin{Lemma}\label{lem: subdif marginal lipsch unconstrained}

Let $\bar{x} \in X$  and suppose that:

\begin{enumerate}

\item $ \forall \; x \in X: F(x):= Y,$
\item $f$ is locally Lipschitz on $\{\bar{x}\}\times S(\bar{x}),$

\item the associated marginal function $\varphi$ is locally Lipschitz at $\bar{x}\in X,$ 

\item the set-valued solution mapping $S$ is inner semicompact and closed at $\bar{x}.$
\end{enumerate}
Then,

\begin{equation}\label{eq: unconstrained marginal}
\partial^{\circ} \varphi(\bar{x})\subseteq \overline{\operatorname{conv}}^*\left( \bigcup_{\bar{y}\in S(\bar{x})}   \big\{x^* \in X^*: (x^*,0) \in \partial^{\circ} f(\bar{x},\bar{y})\big\}    \right).
\end{equation}
 
\end{Lemma}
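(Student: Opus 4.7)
The plan is a support-function comparison in the $w^*$-topology. Denoting the right-hand side of \eqref{eq: unconstrained marginal} by $A$, the set $A$ is $w^*$-closed and convex by construction, while $\partial^\circ \varphi(\bar{x})$ is $w^*$-compact and convex by Proposition \ref{prop:clarke properties}\eqref{item:clarke w*compact subdif}. Since the $w^*$-continuous linear functionals on $X^*$ are represented by elements of $X$, a Hahn--Banach separation argument in $(X^*, w^*)$ reduces the inclusion to the scalar inequality $\varphi^\circ(\bar{x}, u) \leq \sigma_A(u)$ for every $u \in X$, where I would use Proposition \ref{prop:clarke properties}\eqref{item: clarke directional deriv = support} to identify $\sigma_{\partial^\circ \varphi(\bar{x})}$ with $\varphi^\circ(\bar{x},\cdot)$.

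Fix $u \in X$. First I would pass from $\varphi$ to $f$: pick sequences $x_k \to \bar{x}$ and $t_k \downarrow 0$ realizing $\varphi^\circ(\bar{x}, u)$, use inner semicompactness of $S$ at $\bar{x}$ to extract $y_k \in S(x_k)$ with $y_k \to \bar{y}$ along a subsequence, and invoke closedness of $S$ at $\bar{x}$ to guarantee $\bar{y} \in S(\bar{x})$. Combining the identity $\varphi(x_k) = f(x_k, y_k)$ with the upper bound $\varphi(x_k + t_k u) \leq f(x_k + t_k u, y_k + t_k v)$, available for any $v \in Y$ because $F \equiv Y$, and the local Lipschitz property of $f$ around $(\bar{x}, \bar{y})$, the definition of the Clarke directional derivative then yields
\[
\varphi^\circ(\bar{x}, u) \leq f^\circ\bigl((\bar{x}, \bar{y}), (u, v)\bigr) \qquad \forall\, v \in Y.
\]

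The crux is to reduce the resulting infimum over $v$ to a supremum over elements of $G := \partial^\circ f(\bar{x}, \bar{y})$ with vanishing second coordinate. Write $g(v) := f^\circ((\bar{x}, \bar{y}), (u, v)) = \sigma_G(u, v)$: this functional is convex and Lipschitz continuous thanks to the norm-boundedness of $G$ (a consequence of its $w^*$-compactness), and the previous step shows that it is bounded below by the finite number $\varphi^\circ(\bar{x}, u)$. Applying Lemma \ref{lem: ekeland's lemma} to $g$ yields sequences $v_k \in Y$ and $y_k^* \in \partial g(v_k)$ such that $g(v_k) \to \inf_v g(v)$, $\langle y_k^*, v_k\rangle \to 0$, and $\|y_k^*\|_* \to 0$. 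Lemma \ref{lem: subdif support projection function} identifies $\partial g(v_k) = G^{(u, v_k)}_{Y^*}$, so I can select $x_k^* \in X^*$ with $(x_k^*, y_k^*) \in G$ and $\langle x_k^*, u\rangle + \langle y_k^*, v_k\rangle = g(v_k)$, which forces $\langle x_k^*, u\rangle \to \inf_v g(v)$.

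I expect the main obstacle to be the passage to the limit in the sequence $\{(x_k^*, y_k^*)\}$, as sequential $w^*$-compactness is unavailable in a general Banach dual. Here I would use only net-theoretic $w^*$-compactness of $G$ to produce a $w^*$-cluster point $(x^*, y^*) \in G$. Since $y_k^* \to 0$ in norm (hence in the $w^*$-topology), the cluster point must have $y^* = 0$; along the extracting subnet $\langle x_k^*, u\rangle \to \langle x^*, u\rangle$, and matching with the sequential limit $\langle x_k^*, u\rangle \to \inf_v g(v)$ yields $\langle x^*, u\rangle = \inf_v g(v)$ with $(x^*, 0) \in G$. Chaining the bounds then gives
\[
\varphi^\circ(\bar{x}, u) \leq \inf_{v \in Y} f^\circ\bigl((\bar{x}, \bar{y}), (u, v)\bigr) = \langle x^*, u\rangle \leq \sigma_A(u),
\]
which completes the argument.
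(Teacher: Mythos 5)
Your proposal is correct and follows essentially the same route as the paper: the same reduction via $w^*$-separation to the support-function inequality $\varphi^\circ(\bar{x},u)\leq\sigma_A(u)$, the same use of inner semicompactness and closedness of $S$ to pass from $\varphi$ to $f^\circ((\bar{x},\bar{y}),(u,v))$, and the same application of Lemma \ref{lem: ekeland's lemma} and Lemma \ref{lem: subdif support projection function} to $g(v)=\sigma_G(u,v)$ followed by a subnet argument in the $w^*$-compact set $G$. The only difference is organizational: the paper isolates the nonemptiness of $\{x^*:(x^*,0)\in\partial^\circ f(\bar{x},\bar{y})\}$ as a separate claim, whereas your Ekeland construction produces such an element directly.
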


\begin{proof}

Consider the set-valued mapping $\tilde{S}:X \rightrightarrows Y$ defined as $$\tilde{S}(x):=  \{y\in S(x): \; \partial^{\circ} f(x,y) \cap \left( X^*\times \{0\}\right) \neq \emptyset\}.$$ Then, it suffices to take the inclusion  \eqref{eq: unconstrained marginal} over $\bar{y}\in \tilde{S}(\bar{x})$ in the proof.  We need the following claims:

\begin{itemize}
 \item \textbf{Claim 1:} The  inequality below  holds: 
 
 $$\varphi^\circ(\bar{x},\cdot) \leq \sup_{\bar{y}\in S(\bar{x})} \inf_{v\in Y} f^\circ((\bar{x},\bar{y}),(\cdot,v)).$$

Indeed, fix any $u\in X.$ Then, we can find sequences $\{x_k\}_{k\geq 1} \subseteq  X$ and $\{t_k\}_{k\geq 1}\downarrow 0$ such that $x_k\to \bar{x}$ and 

\begin{equation}\label{eq: claim1_1}
\varphi^\circ(\bar{x},u)= \lim_{k\to \infty} \frac{\varphi(x_k+t_k u)-\varphi(x_k)}{t_k}.
\end{equation} Since $S$ is inner semicompact at $\bar{x},$ there exists a sequence $\{y_k\}_{k\geq 1} \subseteq Y$ such that $y_k \in S(x_k)$ for every $k\in \mathbb{N}$ and $\{y_k\}_{k\geq 1}$ contains a convergent subsequence. Without loss of generality, we  assume that $y_k \to \bar{y}$ for some $\bar{y}\in Y.$ Since $S$ is closed at $\bar{x},$ it follows that $\bar{y}\in S(\bar{x}).$ Then, taking into account \eqref{eq: claim1_1} and the definition of $\varphi,$ for any $v\in Y$ we have:
\begin{eqnarray*}
\varphi^\circ(\bar{x},u)   &  \leq & \limsup_{k\to \infty} \frac{f(x_k +t_k u, y_k +t_k v)-f(x_k,y_k)}{t_k}\\
                           & \leq & \limsup_{(x,y)\to (\bar{x},\bar{y}), \; t\downarrow 0} \frac{f(x +tu, y+tv)-f(x,y)}{t}\\
                           &  =  &  f^\circ((\bar{x},\bar{y}),(u,v)).
\end{eqnarray*} Taking now the infimum over $v\in Y$ in the above inequality, we obtain the desired result.

\item \textbf{Claim 2:} $\tilde{S}(\bar{x})\neq \emptyset.$ 

We show that the element  $\bar{y}\in S(\bar{x})$ constructed in the proof of Claim 1 satisfies $\bar{y}\in \tilde{S}(\bar{x}).$ Indeed, fix any $\bar{u}\in X.$ Then, from Claim 1, we get 

\begin{equation}\label{eq:claim1fixedu}
-\infty < \varphi^\circ(\bar{x},\bar{u})\leq \inf_{v\in Y}f^\circ((\bar{x},\bar{y}),(\bar{u},v)).
\end{equation} Set  $G:= \partial^{\circ} f(\bar{x},\bar{y})$ and consider the sets $G_{X^*}, G_{Y^*}.$ Then, by Proposition \ref{prop:clarke properties} \eqref{item:clarke w*compact subdif}, we know that $G$ is nonempty, convex, and $w^*$-  compact. It is then straightforward to verify that $G_{X^*}$ and $G_{Y^*}$ are also nonempty, convex and $w^*$- compact sets. Taking into account now \eqref{eq:claim1fixedu} and Proposition \ref{prop:clarke properties} \eqref{item: clarke directional deriv = support}, we deduce that
$$-\infty < \inf\limits_{v\in Y} \sigma_G(\bar{u},v)\leq \sigma_{G_{X^*}}(\bar{u}) + \inf_{v\in Y}\sigma_{G_{Y^*}}(v).$$ Thus, in particular, 

\begin{equation}\label{eq: claim 2_1}
\inf \limits_{v\in Y}\sigma_{G_{Y^*}}(v) > -\infty.
\end{equation} We now show that $0\in G_{Y^*}.$ Indeed, otherwise we could strongly separate the sets $\{0\}$ and $G_{Y^*}$ (see, e.g., \cite[Theorem 2.2.8.]{GopRiaTamZal:03}) to obtain a vector $\bar{v}\in Y$ such that $$0 = \langle 0, \bar{v} \rangle > \sigma_{G_{Y^*}}(\bar{v}).$$ Hence, taking into account the positive homogeneity of  $\sigma_{G_{Y^*}},$ we obtain

$$-\infty=\lim_{\lambda \to \infty} \sigma_{G_{Y^*}}(\lambda \bar{v}),$$ a contradiction to \eqref{eq: claim 2_1}.

By the definition of $G_{Y^*},$ we conclude that there exists $x^* \in X^*$ such that $(x^*,0)\in G,$ or equivalently, $\bar{y}\in \tilde{S}(\bar{x}).$

\item \textbf{Claim 3:} For $\bar{y} \in \tilde{S}(\bar{x})$  and $\bar{u}\in X,$ we have 

\begin{equation}\label{eq: claim 3}
\sup_{(x^*,0) \in \partial^{\circ} f(\bar{x},\bar{y})} \langle x^*,\bar{u}\rangle= \inf_{v\in Y}f^\circ ((\bar{x},\bar{y}),(\bar{u},v)).
\end{equation}

In order to prove \eqref{eq: claim 3}, let us use the notation of Claim 2 and the function $g$ defined  by \eqref{eq: support projection function}. By Lemma \ref{lem: subdif support projection function}, we have

$$\forall\; v\in Y: \partial g(v)=  G^{(\bar{u},v)}_{Y^*}.$$ 
Furthermore, because $g$ is continuous, convex and bounded from below, it is possible to apply Lemma \ref{lem: ekeland's lemma}. Therefore, we obtain the existence of sequences $\{v_k \}\subseteq Y$ and $\{y_k^*\}_{k\geq 1}\subseteq Y^*$ such that for every $k\in \mathbb{N}$:
\begin{itemize}
\item[(a)] $g(v_k) \to \inf \limits_{y\in Y} g(y),$

\item[(b)] $y_k^*\in G^{(\bar{u},v_k)}_{Y^*},$

\item[(c)] $|\langle y_k^*, v_k \rangle |\to 0,$

\item[(d)] $y_k^* \to 0.$
\end{itemize} In particular, from (b) we conclude the existence of a sequence $\{x_k^*\}_{k\geq 1}\subseteq G_{X^*}$ such that for every $k\in \mathbb{N}$ the inclusion $(x_k^*,y_k^*) \in G$ holds and

\begin{equation}\label{eq: magic happens}
g(v_k) = \langle x_k^*, \bar{u} \rangle + \langle y_k^*, v_k  \rangle.
\end{equation} Since $G$ is $w^*$- compact, the net $\{(x_k^*,y_k^*)\}_{k\geq 1}$ has a convergent subnet. Without loss of generality, let $(x_k^*,y_k^*) \overset{w^*}{\to} (\bar{x}^*,\bar{y}^*)\in G.$ By (d), we get in particular $\bar{y}^*=0$ and $(\bar{x}^*,0) \in G.$ Taking into account (a) and (c), and taking the limit in \eqref{eq: magic happens}, we obtain

\begin{equation}\label{eq: claim 3 const}
\langle \bar{x}^*, \bar{u} \rangle = \inf_{v\in Y} g(v).
\end{equation} This means that

\begin{eqnarray*}
\sup_{(x^*,0) \in \partial^{\circ} f(\bar{x},\bar{y})} \langle x^*,\bar{u}\rangle  \geq   \langle \bar{x}^*, \bar{u} \rangle & \overset{\eqref{eq: claim 3 const}}{=} & \inf_{v\in Y} g(v)\\
    & \overset{\eqref{eq: support projection function}}{=} & \inf_{v\in Y}\sigma_G(\bar{u},v)\\
    & \overset{(\textrm{Proposition  }\ref{prop:clarke properties} \;\eqref{item: clarke directional deriv = support})}{=} & \inf_{v\in Y}f^\circ ((\bar{x},\bar{y}),(\bar{u},v)).    
\end{eqnarray*}

In order to see the validity of the reverse inequality  note that, if $(x^*,0) \in G,$ then from the definition of $\sigma_G$ we obtain 

$$\forall \; (u,v) \in X\times Y:\; \langle x^*, u \rangle \leq \sigma_G(u,v).$$ Fixing $u= \bar{u}$ and taking the infimum over $v\in Y,$ the desired inequality is obtained.

\end{itemize}

Now, assume that $$\bar{x}^* \notin  \overline{\operatorname{conv}}^*\left( \bigcup_{\bar{y}\in \tilde{S}(\bar{x})} \{x^* \in X^*: (x^*,0) \in \partial^{\circ} f(\bar{x},\bar{y})\} \right).$$ By the classical strong separation theorem (see, e.g., \cite[Theorem 2.2.8.]{GopRiaTamZal:03}), we  then find  $\bar{u} \in X$ such that 

\begin{equation}\label{eq: intermediate ineq subdif clarke}
\langle \bar{x}^*,\bar{u} \rangle > \sup_{\bar{y}\in \tilde{S}(\bar{x})} \sup_{(x^*,0)\in \partial^\circ f(\bar{x},\bar{y})}\langle x^*, \bar{u} \rangle.
\end{equation} However, because of Claim 3, we know that \eqref{eq: claim 3} holds. Putting this back into \eqref{eq: intermediate ineq subdif clarke} and using Claim 1 we obtain

$$\langle \bar{x}^*,\bar{u} \rangle > \sup_{\bar{y}\in \tilde{S}(\bar{x})} \inf_{v\in Y}f^\circ ((\bar{x},\bar{y}),(\bar{u},v))\geq \varphi^\circ(\bar{x},\bar{u}).$$ This means that $\bar{x}^* \notin \partial^\circ \varphi(\bar{x}),$ and hence the desired inclusion holds. The proof is complete.
\end{proof}

\begin{Remark} Suppose that $X = \R^n,\; Y = \R^m,$ and that $$f(x,y) = \max \{f_i(x,y) \mid i = 1, \ldots, s\}$$ for some continuously differentiable functions $f_i: \R^n\times \R^m \rightarrow \R,\; i = 1,\ldots,s,$ and some $s\in \N.$ Furthermore, consider the set-valued mapping $I:\R^n\times \R^m \rightrightarrows \{1,\ldots,s\}$ given by $I(x,y) := \{i \in \{1,\ldots,s\} : f_i(x,y) = f(x,y)\}.$ Then, the upper estimate of $\partial^\circ \varphi (\bar{x})$ given in \eqref{eq: unconstrained marginal} is the closure of the one provided in \cite[Theorem 4]{bot2012}, where it is shown under different assumptions that
\begin{equation*}
    \partial^\circ \varphi (\bar{x}) \subseteq \conv \left(\bigcup_{\bar{y} \in S(\bar{x})} \left\{ \sum_{i \in I(\bar{x}, \bar{y})} \mu_i \nabla_xf_i(\bar{x},\bar{y}) : \begin{matrix}\mu_i \geq 0\; \forall\; i \in I(\bar{x},\bar{y}),\\ \sum\limits_{i \in I(\bar{x},\bar{y})}\mu_i = 1,\\\sum\limits_{i \in I(\bar{x},\bar{y})}\mu_i \nabla_y f_i(\bar{x},\bar{y}) = 0
    \end{matrix}    \right\}\right).
\end{equation*} Indeed, because of the particular structure of $f,$ in this case we can apply \cite[Proposition 2.3.12]{Clarke1} to obtain 
$$ \forall\; (x,y) \in \R^n \times \R^m:  \partial^\circ f(x,y) = \conv \{\nabla f_i(x,y) : i \in I(x,y) \}.$$ Thus, in \eqref{eq: unconstrained marginal}, we have that $(x^*,0) \in \partial^\circ f(\bar{x},\bar{y})$ if an only if for each $i \in I(\bar{x},\bar{y})$ there exists $\mu_i \geq 0$ such that $\sum_{i \in I(\bar{x},\bar{y})}\mu_i = 1, \sum_{i \in I(\bar{x},\bar{y})}\mu_i \nabla_y f_i(\bar{x},\bar{y}) = 0,$ and  $x^* =\sum_{i \in I(\bar{x}, \bar{y})} \mu_i \nabla_xf_i(\bar{x},\bar{y}).$ This implies the statement.
\end{Remark}

We can now state the main result of this paper.

\begin{Theorem}\label{thm: basic subdif marginal functions theorem}
Let  $\bar{x} \in X $ and suppose that:

\begin{enumerate}

\item $F$ satisfies the Aubin property at every point of the set $\{\bar{x}\}\times S(\bar{x}),$

\item there is a neighborhood $U$ of $\bar{x}$ and a constant $L_1>0$ such that, for every $x\in U:$

\begin{itemize}
\item $f(x,\cdot)$ is Lipschitz on $Y$ with constant $L_1,$ 

\item $F(x)$ is closed.

\end{itemize}

\item $f$ is locally Lipschitz on $\{\bar{x}\}\times S(\bar{x}),$

\item the associated marginal function $\varphi$ is Lipschitz around $\bar{x}\in X,$ 

\item the set-valued solution mapping $S$ is inner semicompact and closed at $\bar{x}.$

\end{enumerate} Then,
\begin{equation}\label{eq: upper estimate marginal}
\partial^{\circ} \varphi(\bar{x})\subseteq \overline{\operatorname{conv}}^*\left( \bigcup_{\underset{(x^*,y^*)\in \partial^{\circ} f(\bar{x},\bar{y})}{\bar{y}\in S(\bar{x})}}  \bigg[ x^*+ D_C^* F(\bar{x},\bar{y})(y^*) \bigg] \right).
\end{equation}

\end{Theorem}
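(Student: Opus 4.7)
My plan is to reduce the constrained marginal function setting to the unconstrained one treated in Lemma \ref{lem: subdif marginal lipsch unconstrained} via a Lipschitz exact penalty for the constraint $y \in F(x)$, and then to convert the resulting information into the desired coderivative estimate through the Clarke sum rule. Concretely, for a large constant $\kappa > 0$ to be fixed later, I would introduce the auxiliary functional
\begin{equation*}
    \tilde{f}(x,y) := f(x,y) + \kappa \, d\big((x,y), \gph F\big).
\end{equation*}
Hypotheses (ii) and (iii), together with the fact that $d(\cdot, \gph F)$ is globally $1$-Lipschitz, ensure that $\tilde{f}$ is locally Lipschitz around each point of $\{\bar{x}\}\times S(\bar{x})$. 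A useful preliminary observation is that, under (v), the set $S(\bar{x})$ is sequentially (hence norm) compact; combined with the pointwise Aubin property (i), this permits the choice of a uniform Aubin constant $\ell$ valid on a neighborhood of $\{\bar{x}\}\times S(\bar{x})$.

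The heart of the reduction is an exact penalty claim: for $\kappa > L_1(1 + \ell)$, the identity
\begin{equation*}
    \tilde{\varphi}(x) := \inf_{y\in Y} \tilde{f}(x,y) = \varphi(x)
\end{equation*}
holds on a neighborhood of $\bar{x}$, with the associated solution map satisfying $\tilde{S}(\bar{x}) = S(\bar{x})$. The inequality $\tilde{\varphi} \leq \varphi$ is trivial since $d((x,y),\gph F) = 0$ when $y\in F(x)$. For the reverse, I would fix $y\in Y$, choose $(x', y') \in \gph F$ almost realizing $d((x,y), \gph F)$, use the Aubin property of $F$ to produce $y'' \in F(x)$ with $\|y''-y'\| \leq \ell \|x-x'\|$, and then combine this with the Lipschitz bound on $f(x,\cdot)$ to conclude
\begin{equation*}
    f(x,y) \geq f(x,y'') - L_1(1+\ell)\|(x-x',y-y')\| \geq \varphi(x) - L_1(1+\ell)\, d\big((x,y), \gph F\big).
\end{equation*}
Inner semicompactness of $S$ would then be exploited to localize the argument and to ensure that the infimum in the definition of $\tilde{\varphi}$ is effectively taken over a bounded neighborhood of $S(\bar{x})$. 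In particular, $\tilde{\varphi}$ inherits local Lipschitz continuity from (iv), and $\tilde{S}$ inherits inner semicompactness and closedness from (v).

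With these preparations in place, Lemma \ref{lem: subdif marginal lipsch unconstrained} applied to $\tilde{f}$ and $\tilde{\varphi}$ gives
\begin{equation*}
    \partial^\circ \varphi(\bar{x}) \subseteq \overline{\operatorname{conv}}^*\bigg(\bigcup_{\bar{y}\in S(\bar{x})} \big\{z^* \in X^* : (z^*,0) \in \partial^\circ \tilde{f}(\bar{x},\bar{y})\big\}\bigg).
\end{equation*}
Combining the Clarke sum rule (Proposition \ref{prop:clarke properties}\eqref{item:clarke sum rule}) with the classical inclusion $\partial^\circ d(\cdot, \gph F)(\bar{x},\bar{y}) \subseteq N_C(\gph F, (\bar{x},\bar{y})) \cap \mathbb{B}^*$ (see \cite{Clarke1}) and the fact that $N_C(\gph F, (\bar{x},\bar{y}))$ is a cone, any $(z^*, 0) \in \partial^\circ \tilde{f}(\bar{x},\bar{y})$ decomposes as $(z^*, 0) = (x^*, y^*) + (p^*, q^*)$ with $(x^*, y^*) \in \partial^\circ f(\bar{x},\bar{y})$ and $(p^*, q^*) \in N_C(\gph F,(\bar{x},\bar{y}))$. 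Since $q^* = -y^*$, the definition of the Clarke coderivative yields $p^* \in D_C^* F(\bar{x},\bar{y})(y^*)$, so that $z^* = x^* + p^* \in x^* + D_C^* F(\bar{x},\bar{y})(y^*)$, which is precisely the desired form.

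The main obstacle, in my view, is the exact penalty step: securing the uniform Aubin constant $\ell$ across the whole set $S(\bar{x})$ and controlling the infimum over an unbounded $Y$ with only local information both rest on extracting norm-compactness of $S(\bar{x})$ from the inner semicompactness and closedness hypotheses and on a careful localization of the auxiliary minimization, which is where most of the technical care is required.
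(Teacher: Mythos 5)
Your overall strategy --- exact penalization to reduce to the unconstrained Lemma \ref{lem: subdif marginal lipsch unconstrained}, followed by the sum rule and the identification of the normal-cone component with the coderivative --- is the same as the paper's, and your final decomposition step $(z^*,0)=(x^*,y^*)+(p^*,-y^*)$ with $p^*\in D_C^*F(\bar{x},\bar{y})(y^*)$ is correct. The gap is in the penalization itself. First, $S(\bar{x})$ need not be norm compact: inner semicompactness only asserts that for each $x_k\to\bar{x}$ \emph{some} selection $y_k\in S(x_k)$ has a convergent subsequence, so taking $x_k\equiv\bar{x}$ produces one convergent sequence in $S(\bar{x})$, not subsequential convergence of every sequence; hence the uniform Aubin constant $\ell$ over all of $\{\bar{x}\}\times S(\bar{x})$ is not available. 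Second, and more seriously, the identity $\inf_{y\in Y}\{f(x,y)+\kappa\, d((x,y),\gph F)\}=\varphi(x)$ with the \emph{graph} distance requires you to control the point $(x',y')\in\gph F$ nearly attaining $d((x,y),\gph F)$ for arbitrary $y\in Y$; this point can lie far from $\{\bar{x}\}\times S(\bar{x})$, where neither the Aubin property of $F$ nor joint Lipschitz continuity of $f$ is assumed, and your chain $f(x,y)\geq f(x,y'')-L_1(1+\ell)\|(x-x',y-y')\|$ silently uses both. The proposed localization via inner semicompactness does not close this: that hypothesis concerns minimizers of the \emph{constrained} problem, whereas the penalized objective can dip below $\varphi(x)$ at points $y$ whose nearest graph points are far from $S(\bar{x})$ (for instance where $\gph F$ has a sheet, isolated in the $x$-direction, on which $f$ decreases steeply in $x$; the hypotheses do not exclude this).

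The paper sidesteps the difficulty by penalizing with the partial distance $\Delta_F(x,y):=\inf_{u\in F(x)}\|y-u\|$ instead of $d((x,y),\gph F)$. For each fixed $x\in U$, Clarke's exact penalization principle (\cite[Proposition 2.4.3]{Clarke1}) yields $\varphi(x)=\inf_{y\in Y}\{f(x,y)+L'\Delta_F(x,y)\}$ for any $L'>L_1$, together with equality of the solution sets, using only the closedness of $F(x)$ and the global $L_1$-Lipschitz continuity of $f(x,\cdot)$ --- no Aubin property, no uniformity over $S(\bar{x})$, and no localization in $y$ are required. The Aubin property then enters only afterwards, and only pointwise at each $(\bar{x},\bar{y})$ with $\bar{y}\in S(\bar{x})$: it guarantees that $\Delta_F$ is locally Lipschitz there (so that Lemma \ref{lem: subdif marginal lipsch unconstrained} and the sum rule apply) and that $\partial^{\circ}\Delta_F(\bar{x},\bar{y})\subseteq N_C(\gph F,(\bar{x},\bar{y}))$. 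If you replace your penalty by $L'\Delta_F$, the remainder of your argument goes through essentially verbatim.
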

\begin{proof} Since $S$ is inner semicompact at $\bar{x},$ in particular we have $\bar{x} \in \Int (\dom S).$ Hence, without loss of generality, we can assume that $U \subseteq \Int (\dom S).$ It follows that the assumptions of \cite[Proposition 2.4.3]{Clarke1} are fulfilled. Then, for any $L' > L_1$ and every $x\in U,$ 
	
	$$ \varphi(x)=\inf_{y\in Y} \bigg \{f(x,y)+ L' \Delta_F(x,y)\bigg\}$$  and
	$$S(x) = \big\{y \in Y \mid f(x,y) + L' \Delta_F(x,y)= \varphi(x)\big\},$$
where the function $\Delta_F: X\times Y\rightarrow \R$ given by $\Delta_F(x,y):=\inf\limits_{u\in F(x)}\|y-u \|$. We consider $f_1: X\times Y \to \R$ defined as 

$$f_1(x,y):= f(x,y)+ L' \Delta_F(x,y).$$
Therefore,

$$\forall \; x\in U: \;\varphi(x)=\inf_{y\in Y} f_1(x,y).$$ Since $F$ satisfies the Aubin property at every point of the set $\{\bar{x}\}\times S(\bar{x}),$ according to \cite[Theorem 1.41]{Mordukhovich1}, we have that $\Delta_F$ is locally Lipschitz at any point $(\bar{x},\bar{y}) \in \{\bar{x}\} \times S(\bar{x}).$ This, together with assumption $(iii),$ implies that $f_1$ is also  locally Lipschitz at every point of the set $\{\bar{x}\} \times S(\bar{x}).$ Applying now Lemma \ref{lem: subdif marginal lipsch unconstrained}, we get 

\begin{equation}\label{eq: basic subdif marginal functions theorem 1}
\partial^{\circ} \varphi(\bar{x})\subseteq \overline{\operatorname{conv}}^*\left( \bigcup_{\bar{y}\in S(\bar{x})} \left\{ x^* \in X^* \mid (x^*,0) \in \partial^{\circ} f_1(\bar{x},\bar{y})\right\} \right).
\end{equation} By  Proposition \ref{prop:clarke properties} \eqref{item:clarke sum rule} and \cite[Corollary 1.4 $(ii)$]{Thibault1991}, we have

\begin{equation*}
\partial^{\circ} f_1(\bar{x},\bar{y}) \subseteq \partial^{\circ} f(\bar{x},\bar{y}) +\partial^{\circ} \Delta_F(\bar{x},\bar{y})\subseteq \partial^{\circ} f(\bar{x},\bar{y})+ N_C(\gph F,(\bar{x},\bar{y})).
\end{equation*} Taking into account this with \eqref{eq: basic subdif marginal functions theorem 1}, we get

$$\partial^{\circ} \varphi(\bar{x})\subseteq \overline{\operatorname{conv}}^*\left( \bigcup_{\bar{y}\in S(\bar{x})} \left\{ x^* \in X^* \mid (x^*,0) \in  \partial^{\circ} f(\bar{x},\bar{y})+ N_C(\gph F,(\bar{x},\bar{y}))\right\} \right),$$ which is equivalent to our statement.
\end{proof}

\begin{Remark} 

If the spaces $X$ and $Y$ are Asplund, then an upper estimate of Clarke's subdifferential of $\varphi$ can be obtained by means of the result  in \cite[Theorem 7]{MNY2009} (see also \cite{Thibault1991}) and the fact that $\partial^\circ \varphi(\bar{x}) = \overline{\operatorname{conv}}^* \left( \partial_M \varphi (\bar{x}) \right),$ where  $\partial_M$ stands for Mordukhovich's (limiting) subdifferential, see \cite[Theorem 3.57 $(ii)$]{Mordukhovich1}. In that case, one can show that

$$\partial^{\circ} \varphi(\bar{x})\subseteq \overline{\operatorname{conv}}^*\left( \bigcup_{\bar{y}\in S(\bar{x})} \left\{ x^* \in X^* \mid (x^*,0) \in  \partial^{\circ} f(\bar{x},\bar{y})+ N_M(\gph F,(\bar{x},\bar{y}))\right\} \right),$$ where $N_M(\gph F,(\bar{x},\bar{y}))$ denotes the limiting normal cone of $\gph F$ at $(\bar{x},\bar{y})$. Moreover, according to \cite[Theorem 3.57 $(i)$]{Mordukhovich1}, we also have  $$N_C(\gph F,(\bar{x},\bar{y})) = \overline{\operatorname{conv}}^* \left(N_M(\gph F,(\bar{x},\bar{y}))\right).$$ It then follows that an upper estimate obtained in this way is significantly smaller than the one derived by us in Theorem \ref{thm: basic subdif marginal functions theorem}. Thus, our results are more relevant in the case in which the Asplund assumption on the spaces is not satisfied.

\end{Remark}
We conclude the section with an example that illustrates an additional drawback of the upper estimate  \eqref{eq: upper estimate marginal} in Theorem \ref{thm: basic subdif marginal functions theorem}. Specifically, we show that, depending on the representation of $\varphi$  by the set-valued mapping $F$, the estimates obtained can be both trivial and exact.

\begin{Example}\label{ex: dpendency subdiff}
Set $X = Y = \R,$ and consider $F_1, F_2: \R \rightrightarrows \R, f: \R^2 \to \R$ defined respectively as 

$$\forall\; (x,y) \in \R^2 : \;F_1(x) = \{|x|\}, \;\; F_2(x) = F_1(x) + [0,1],\;\; f(x,y) = y.$$ Then, we have $|\cdot| = \varphi(\cdot) = \inf\limits_{y\in F_1(\cdot)} f(\cdot,y) = \inf\limits_{y\in F_2(\cdot)} f(\cdot,y).$ Let us then analyze the upper estimate \eqref{eq: upper estimate marginal} for each case  when $\bar{x} = 0$. 

\begin{enumerate}
\item \textbf{Case 1:} $F = F_1.$

Since $S(0) = \{0\}$ and $\partial^\circ f(0,0) = \left\{\begin{pmatrix} 0 \\ 1 \end{pmatrix} \right\},$ the upper estimate is exactly 

$$U_1: = \overline{\operatorname{conv}}^*\left(\bigcup_{(x^*,y^*) \in \{(0,1)\}} [x^* + D^*_C F_1(0,0)(y^*)] \right) = D^*_C F_1(0,0)(1).$$ Furthermore, it is well known that $N_C(\gph |\cdot|, (0,0)) = \R^2,$ so that we actually have $U_1  = \R.$

\item \textbf{Case 2:} $F = F_2.$

Similarly to the previous case, we find that the upper estimate is  

$$U_2 := D^*_C F_2(0,0)(1) = [-1,1] = \partial^\circ \varphi(0),$$ and hence the equality holds.

\end{enumerate}

\end{Example}


\section{Conclusions}
\label{section:Conclusions}

In our paper, we derived an upper estimate of Clarke’s subdifferential of marginal functions in Banach spaces. The results are mostly of interest in the case in which the underlying spaces are Banach, but do not necessarily satisfy the Asplund condition.  Using our results, it is possible to derive necessary optimality conditions for solutions of set-valued optimization problems in the setting of general Banach spaces analogously to the procedure in \cite{bouzaquintanatuantammer2020} for Asplund spaces.
\bibliographystyle{siam}
\bibliography{references}

\def\cfac#1{\ifmmode\setbox7\hbox{$\accent"5E#1$}\else
  \setbox7\hbox{\accent"5E#1}\penalty 10000\relax\fi\raise 1\ht7
  \hbox{\lower1.15ex\hbox to 1\wd7{\hss\accent"13\hss}}\penalty 10000
  \hskip-1\wd7\penalty 10000\box7} \def\Dbar{\leavevmode\lower.6ex\hbox to
  0pt{\hskip-.23ex \accent"16\hss}D}
  \def\cfac#1{\ifmmode\setbox7\hbox{$\accent"5E#1$}\else
  \setbox7\hbox{\accent"5E#1}\penalty 10000\relax\fi\raise 1\ht7
  \hbox{\lower1.15ex\hbox to 1\wd7{\hss\accent"13\hss}}\penalty 10000
  \hskip-1\wd7\penalty 10000\box7} \def\Dbar{\leavevmode\lower.6ex\hbox to
  0pt{\hskip-.23ex \accent"16\hss}D}
  \def\cfac#1{\ifmmode\setbox7\hbox{$\accent"5E#1$}\else
  \setbox7\hbox{\accent"5E#1}\penalty 10000\relax\fi\raise 1\ht7
  \hbox{\lower1.15ex\hbox to 1\wd7{\hss\accent"13\hss}}\penalty 10000
  \hskip-1\wd7\penalty 10000\box7} \def\Dbar{\leavevmode\lower.6ex\hbox to
  0pt{\hskip-.23ex \accent"16\hss}D}
  \def\cfac#1{\ifmmode\setbox7\hbox{$\accent"5E#1$}\else
  \setbox7\hbox{\accent"5E#1}\penalty 10000\relax\fi\raise 1\ht7
  \hbox{\lower1.15ex\hbox to 1\wd7{\hss\accent"13\hss}}\penalty 10000
  \hskip-1\wd7\penalty 10000\box7}
\begin{thebibliography}{10}

\bibitem{AY2015}
{\sc D.~T.~V. An and N.~D. Yen}, {\em Differential stability of convex
  optimization problems under inclusion constraints}, Appl. Anal., 94 (2015),
  pp.~108--128.

\bibitem{bot2012}
{\sc R.~I. Bo\c{t}}, {\em An upper estimate for the {C}larke subdifferential of
  an infimal value function proved via the {M}ordukhovich subdifferential},
  Nonlinear Anal., 75 (2012), pp.~1141--1146.

\bibitem{BondarevskyLeschovMinchenko2016}
{\sc V.~Bondarevsky, A.~Leschov, and L.~Minchenko}, {\em Value functions and
  their directional derivatives in parametric nonlinear programming}, J. Optim.
  Theory Appl., 171 (2016), pp.~440--464.

\bibitem{bouzaquintanatuantammer2020}
{\sc G.~Bouza, E.~Quintana, V.~Tuan, and C.~Tammer}, {\em The {F}ermat rule for
  set optimization problems with {L}ipschitzian set-valued mappings}, Journal
  of Nonlinear and Convex Analysis, 21 (2020), pp.~1137--1174.

\bibitem{Clarke1}
{\sc F.~H. Clarke}, {\em Optimization and nonsmooth analysis}, vol.~5 of
  Classics in Applied Mathematics, Society for Industrial and Applied
  Mathematics (SIAM), Philadelphia, PA, second~ed., 1990.

\bibitem{Eke74}
{\sc I.~Ekeland}, {\em On the variational principle}, J. Math. Anal. Appl., 47
  (1974), pp.~324--353.

\bibitem{eke79}
{\sc I.~Ekeland}, {\em Nonconvex minimization problems}, Bull. Amer. Math.
  Soc., 1 (1979), pp.~443--474.

\bibitem{GopRiaTamZal:03}
{\sc A.~G{\"o}pfert, H.~Riahi, C.~Tammer, and C.~Z{\u{a}}linescu}, {\em
  Variational methods in partially ordered spaces}, Springer, New York, 2003.

\bibitem{HUrruty1978}
{\sc J.-B. Hiriart-Urruty}, {\em Gradients g\'{e}n\'{e}ralis\'{e}s de fonctions
  marginales}, SIAM J. Control Optimization, 16 (1978), pp.~301--316.

\bibitem{jahn2006}
{\sc J.~Jahn}, {\em Introduction to the theory of nonlinear optimization},
  Springer, Berlin, third~ed., 2007.

\bibitem{Luderer1991}
{\sc B.~Luderer}, {\em Directional derivative estimates for the optimal value
  function of a quasidifferentiable programming problem}, Math. Programming, 51
  (1991), pp.~333--348.

\bibitem{Minchenko1991}
{\sc L.~I. Minchenko}, {\em Directional differentiation of marginal functions
  in mathematical programming problems}, Kibernet. Sistem. Anal.,  (1991),
  pp.~70--77, 189.

\bibitem{Mordukhovich1}
{\sc B.~Mordukhovich}, {\em Variational analysis and generalized
  differentiation. {I}}, vol.~330 of Grundlehren der Mathematischen
  Wissenschaften [Fundamental Principles of Mathematical Sciences],
  Springer-Verlag, Berlin, 2006.
\newblock Basic theory.

\bibitem{MNY2009}
{\sc B.~S. Mordukhovich, N.~M. Nam, and N.~D. Yen}, {\em Subgradients of
  marginal functions in parametric mathematical programming}, Math. Program.,
  116 (2009), pp.~369--396.

\bibitem{Schirotzek2007}
{\sc W.~Schirotzek}, {\em Nonsmooth analysis}, Universitext, Springer, Berlin,
  2007.

\bibitem{Thibault1991}
{\sc L.~Thibault}, {\em On subdifferentials of optimal value functions}, SIAM
  J. Control Optim., 29 (1991), pp.~1019--1036.

\end{thebibliography}

\end{document}